\documentclass[11pt]{amsart}
\usepackage{preamble}

\usepackage{tikz-cd}

\newcommand{\Ex}{\operatorname{Ex}}

\newcommand{\Mov}{\operatorname{Mov}}

\newcommand{\Fix}{\operatorname{Fix}}

\begin{document}
\title[A characterization of Fano type varieties]{A characterization of Fano type varieties}

\subjclass[2020]{14E30}

\begin{abstract}
We prove a characterization of Fano type varieties.
\end{abstract}

\author{Yiming Zhu}
\address{School of Mathematical Science, University of Science and Technology of China, Hefei 230026, P.R.China.} \email{zym18119675797@gmail.com}

\maketitle

\setcounter{tocdepth}{1}
\tableofcontents

\section{Introduction}

Let $X$ be a normal variety. We say that $X$ is of klt type if there is an effective $\Qq$-divisor $\Delta$ such that $(X,\Delta)$ is klt. Let $X$ be a projective normal variety. We say that $X$ is of Fano type if there is an effective $\Qq$-divisor $\Delta$ such that $(X,\Delta)$ is klt and $-(K_X+\Delta)$ is ample.  

In \cite{zhuang2024direct}, Zhuang proved that pure images of klt type singularities are of klt type. The following klt type criterion is crucial in his proof.

\begin{lem}\cite[Lemma 2.4]{zhuang2024direct}\label{lem:zhuang}
Let $X$ be a normal variety. Then $X$ is of klt type if and only if 
\begin{enumerate}
    \item the anticanonical ring $R(X,-K_X):=\bigoplus_{m\geq0}O_X(-mK_X)$ is finitely generated over $O_X$, and
    \item $X':=\proj _XR(X,-K_X)$ is klt.
\end{enumerate}
\end{lem}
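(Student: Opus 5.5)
Both directions are local over $X$, since finite generation of a sheaf of $O_X$-algebras and the klt property can each be checked on an affine open cover. I will therefore assume $X$ is affine. I write $f\colon X'\to X$ for the structure morphism of $\proj_X R(X,-K_X)$.

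\emph{($\Rightarrow$).} Suppose $(X,\Delta)$ is klt. I would take a small $\Qq$-factorial modification $g\colon Y\to X$, so that $g$ is small and $Y$ is $\Qq$-factorial; this exists by BCHM applied to a log resolution. Writing $\Delta_Y=g^{-1}_*\Delta$, we have $K_Y+\Delta_Y=g^*(K_X+\Delta)$, and $(Y,\Delta_Y)$ is klt. Since $\Delta_Y$ is effective and $Y$ is $\Qq$-factorial, $(Y,(1-\epsilon)\Delta_Y)$ is still klt for small $\epsilon>0$. Moreover
\[-K_Y\equiv_X \Delta_Y \equiv_X \tfrac{1}{\epsilon}\bigl(K_Y+(1-\epsilon)\Delta_Y\bigr) \quad\text{over } X.\]
Because $g$ is birational, every divisor on $Y$ is big over $X$. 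BCHM then gives finite generation of $\bigoplus_m g_*O_Y(-mK_Y)$ over $X$, and this is the ring of a klt pair over $X$. Since $g$ is small, $g_*O_Y(-mK_Y)=O_X(-mK_X)$, so (1) holds.

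For (2), I run a $(K_Y+(1-\epsilon)\Delta_Y)$-MMP over $X$, which is the same as a $(-K_Y)$-MMP over $X$. It terminates in a model $Y^+$ on which $-K_{Y^+}$ is semiample and big over $X$. The ample model of $-K_{Y^+}$ is $X'=\proj_X R(X,-K_X)$, and the morphism $Y^+\to X'$ is $K$-trivial up to $\epsilon\Delta$. More precisely, $X'$ is the lc model of the klt pair $(Y,(1-\epsilon)\Delta_Y)$ over $X$, so $X'$ carries a boundary making it klt. That alone is not quite (2), which asks that $X'$ itself be klt, i.e.\ that $K_{X'}$ be $\Qq$-Cartier with klt singularities.

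This is where I expect the main obstacle. The key point is that $-K_{X'}$ is $\Qq$-Cartier and $f$-ample by construction, since $O_{X'}(1)=O_{X'}(-rK_{X'})$ for divisible $r$. So $K_{X'}$ is $\Qq$-Cartier. Writing $K_{X'}+\Delta'$ for the pushforward of the klt pair, $\Delta'$ is effective and $K_{X'}$ is $\Qq$-Cartier, so the discrepancies of $X'$ are at least those of $(X',\Delta')$. Hence $X'$ is klt.

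\emph{($\Leftarrow$).} Assume (1) and (2). The morphism $f$ is small. The reason is that the graded pieces are reflexive, so $f$ is an isomorphism in codimension one over $X$, and $f$ contracts no divisor because $-K_{X'}$ is $f$-ample and $f_*O_{X'}(-mK_{X'})=O_X(-mK_X)$. Since $-K_{X'}$ is $f$-ample, I pick a general effective $\Qq$-divisor $H'\sim_{\Qq,X}-K_{X'}$ with small coefficients. Then $(X',H')$ is klt and $K_{X'}+H'\sim_{\Qq,X}0$.

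Because $X$ is affine, after adjusting by pullbacks of principal divisors we get $K_{X'}+H'\sim_\Qq f^*L$ with $L\sim_\Qq 0$ on $X$. Set $\Delta=f_*H'\geq0$. Since $f$ is small, $K_X+\Delta=f_*(K_{X'}+H')$ is $\Qq$-Cartier, being $\Qq$-linearly trivial, and its pullback is $K_{X'}+H'$. So $(X,\Delta)$ has the same discrepancies as $(X',H')$ and is klt.

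The delicate points are the smallness of $f$ and the identification $X'=\proj$ in the forward direction. I expect the $\Qq$-Cartier and klt property of $X'$ in ($\Rightarrow$) to be the step needing the most care.
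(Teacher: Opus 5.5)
The paper gives no proof of this lemma; it quotes it from Zhuang, so your argument is judged on its own. Your overall plan is the standard one, but the ($\Rightarrow$) direction has a sign error that makes it prove the wrong statement. From $K_Y+\Delta_Y\sim_{\Qq,X}0$ you get $K_Y+(1-\epsilon)\Delta_Y\sim_{\Qq,X}-\epsilon\Delta_Y\sim_{\Qq,X}\epsilon K_Y$, not $-\epsilon K_Y$. Three things go wrong as a result:
\begin{itemize}
\item BCHM applied to $(Y,(1-\epsilon)\Delta_Y)$ gives finite generation of the \emph{canonical} algebra $\bigoplus_m O_X(mK_X)$, not the anticanonical one.
\item Your MMP is a $K_Y$-MMP, not a $(-K_Y)$-MMP.
\item Its lc model is the relative canonical model $\proj_X\bigoplus_m O_X(mK_X)$, on which $K$ (not $-K$) is relatively ample. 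It differs from $X'$ unless $K_X$ is $\Qq$-Cartier.
\end{itemize}
You need the pair $(Y,(1+\epsilon)\Delta_Y)$. Then $K_Y+(1+\epsilon)\Delta_Y\sim_{\Qq,X}-\epsilon K_Y$, and this pair is still klt for $0<\epsilon\ll1$. The reason is that $\Delta_Y$ is $\Qq$-Cartier, so the discrepancies on a fixed log resolution vary continuously in $\epsilon$. That openness is the real use of $\Qq$-factoriality here. Also write $\sim_{\Qq,X}$ rather than $\equiv_X$, since section rings see linear, not numerical, equivalence; here it holds because $K_Y+\Delta_Y=g^*(K_X+\Delta)$. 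With this fix, (1) follows from BCHM and $g_*O_Y(-mK_Y)=O_X(-mK_X)$. For (2) no MMP is needed. By \cite[Lemma 6.2]{KM98}, $f$ is small and $K_{X'}$ is $\Qq$-Cartier. Hence $K_{X'}+f^{-1}_*\Delta=f^*(K_X+\Delta)$, so $(X',f^{-1}_*\Delta)$ is klt. Dropping the effective $\Qq$-Cartier divisor $f^{-1}_*\Delta$ then shows $X'$ is klt; this is just your final step applied directly.

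Your reduction to affine $X$ is harmless for ($\Rightarrow$) but not for ($\Leftarrow$). Being of klt type asks for one global boundary, and the divisors $f_*H'$ you build on different affine charts need not agree on overlaps. That klt type is a local property is a \emph{consequence} of this lemma, so it cannot be assumed. Instead, run your construction globally, with $X$ quasi-projective as BCHM requires anyway:
\begin{itemize}
\item Pick $m\geq2$ and an ample Cartier divisor $A$ on $X$ such that $-mrK_{X'}+f^*A$ is globally generated. This is possible since $-rK_{X'}$ is Cartier and $f$-ample.
\item Let $G$ be a general member of that linear system and set $H'=\frac{1}{mr}G$. Then $(X',H')$ is klt and $K_{X'}+H'\sim_{\Qq}\frac{1}{mr}f^*A$.
\item Push forward: $\Delta=f_*H'$ satisfies $K_X+\Delta\sim_{\Qq}\frac{1}{mr}A$ and $f^*(K_X+\Delta)=K_{X'}+H'$, so $(X,\Delta)$ is klt.
\end{itemize}
This direction never uses that $f$ is small. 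That matters because your justification of smallness is circular: writing $O_{X'}(r)\cong O_{X'}(-rK_{X'})$ already presupposes that $f$ has no exceptional divisors. Where smallness is genuinely needed, in ($\Rightarrow$), cite \cite[Lemma 6.2]{KM98}, as the paper itself does in the proof of Theorem \ref{thm:main}.
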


In this note, we prove the following theorem, which is a global version of \ref{lem:zhuang}. 

\begin{thm}\label{thm:main}
Let $X$ be a projective normal variety. Then $X$ is of Fano type if and only if 
\begin{enumerate}
    \item $-K_X$ is big,
    \item the anticanonical ring $R(X,-K_X):=\bigoplus_{m\geq0}H^0(X,-mK_X)$ is finitely generated, and 
    \item $Y:=\proj_{\Cc} R(X,-K_X)$ is klt. 
\end{enumerate}
\end{thm}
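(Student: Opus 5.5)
For the forward direction, assume $(X,\Delta)$ is klt with $-(K_X+\Delta)$ ample. Then (1) is immediate, since $-K_X = -(K_X+\Delta)+\Delta$ is ample plus effective and hence big.

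For (2), use that a Fano type variety is a Mori dream space (BCHM). In particular $R(X,-K_X)$ is finitely generated. Concretely, $-K_X \sim_{\Qq} \epsilon\Delta' + (\text{ample})$ after perturbing, and one applies finite generation for klt pairs of the form $K_X+\Delta+\frac{1}{m}(\ldots)$.

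For (3), run a $(-K_X)$-MMP. Since $X$ is a Mori dream space and $-K_X$ is big, this gives a birational contraction $\phi\colon X\dashrightarrow X_{\min}$ with $-K_{X_{\min}}$ semiample and big. The induced morphism $X_{\min}\to Y$ is birational and $K$-trivial, so $K_{X_{\min}}=g^*K_Y$. Fano type is preserved under birational contractions and $(-K)$-MMP steps (push forward a complement), so $X_{\min}$ is of Fano type. There is then $\Delta_{\min}$ with $(X_{\min},\Delta_{\min})$ klt and $K_{X_{\min}}+\Delta_{\min}\sim_{\Qq} 0$: take a general member of $|-m(K+\Delta)|$ added to $\Delta$. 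Setting $\Delta_Y := g_*\Delta_{\min}$, the divisor $K_Y+\Delta_Y$ is $\Qq$-Cartier and $\sim_{\Qq}0$, and $(Y,\Delta_Y)$ is klt because $g^*(K_Y+\Delta_Y)=K_{X_{\min}}+\Delta_{\min}$. This shows $Y$ is of klt type. If "$Y$ is klt" in (3) means klt with boundary $0$, I would additionally check that $K_Y$ is $\Qq$-Cartier: it is, since $-K_Y$ is $\Qq$-Cartier as $\mathcal{O}_Y(1)$ up to scaling. Then $(Y,0)$ is klt because $K_{X_{\min}}=g^*K_Y$ and $(X_{\min},0)$ is klt.

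For the converse, let $f\colon X\dashrightarrow Y$ be the induced map and take a common resolution $p\colon W\to X$, $q\colon W\to Y$. Finite generation gives, for $m$ divisible, the decomposition $p^*(-mK_X)=q^*A+F$, where $A$ is the ample $\mathcal{O}_Y(m)$ and $F\ge 0$ is the fixed part. Since $-K_X$ is big, $Y$ has dimension $\dim X$ and $f$ is birational. The map $X\dashrightarrow Y$ contracts no divisor of $Y$, i.e.\ $f^{-1}$ extracts no divisors; this should hold because the section ring construction identifies $Y$ with the ample model. Write $K_W + E_W = p^*K_X$ and $K_W+E'_W=q^*K_Y$. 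The key comparison is $p^*(-K_X)=q^*(-K_Y)+\frac1m F$, using $-K_Y=f_*(-K_X)$ and $\mathcal{O}_Y(1)\sim -K_Y$ up to the standard identification. This gives
\[
q^*K_Y = p^*K_X + \tfrac1m F,
\]
so the discrepancies of $X$ are at least those of $Y$ plus the $F$-contributions. Hence $X$ is klt once $Y$ is klt and $K_X$ is $\Qq$-Cartier. Since $K_X$ need not be $\Qq$-Cartier, I would instead construct the boundary directly. Choose a general $H\in|A|$ on $Y$ and set $\Delta_Y=\frac{1-\epsilon}{m}H$. Then $(Y,\Delta_Y)$ is klt and $-(K_Y+\Delta_Y)\sim_{\Qq}\frac{\epsilon}{m}A$ is ample. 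On $X$, set $\Delta := p_*(\frac{1-\epsilon}m q^*H + \frac1m F)$, which is effective and satisfies $K_X+\Delta\sim_{\Qq}-\frac{\epsilon}{m}p_*q^*A$. This is crepant to $(Y,\Delta_Y)$ up to the term $\epsilon\cdot$(effective), so $(X,\Delta)$ is klt with $-(K_X+\Delta)$ big and nef-ish in the pullback sense. Finally, perturb by a small multiple of an effective divisor $N$ with $p_*q^*A - \delta N$ ample on $X$ (Kodaira's lemma) to get an ample $-(K_X+\Delta')$.

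The main obstacle is this last perturbation. The divisor $q^*A$ is only big and nef on $W$, and its pushforward need not be $\Qq$-Cartier on $X$, so I would first replace $X$ by a small $\Qq$-factorialization. This uses that $X$ is of klt type, which follows from Lemma~\ref{lem:zhuang} applied locally, or from the klt pair just constructed. Fano type descends from a small modification, so this completes the proof.
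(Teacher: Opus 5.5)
Your forward direction is a valid alternative to the paper's route. You essentially reprove \cite[Theorem 1.1]{CasciniGongyo2013anticanonical} via the Mori dream space property and a $(-K_X)$-MMP, whereas the paper passes to the small $\Qq$-Gorenstein model $\proj_X\bigoplus_m\mathcal{O}_X(-mK_X)$ using Lemma~\ref{lem:zhuang} and then quotes Cascini--Gongyo. One caveat: the BCHM Mori dream space result needs $\Qq$-factoriality, so first replace $X$ by a small $\Qq$-factorialization. This exists because $X$ is of klt type, is again of Fano type, and leaves $R(X,-K_X)$ unchanged.

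The converse, however, breaks at the end. For $\epsilon>0$ you have $K_X+\Delta\sim_{\Qq}-\frac{\epsilon}{m}p_*q^*A$, and $p_*q^*A$ need not be $\Qq$-Cartier. So $(X,\Delta)$ is not a log pair, and calling it klt is meaningless. Your remedy, a small $\Qq$-factorialization of $X$, presupposes that $X$ is of klt type, which is essentially what has to be proved. Appealing to ``the klt pair just constructed'' is circular. Lemma~\ref{lem:zhuang} needs finite generation of the sheaf of algebras $\bigoplus_m\mathcal{O}_X(-mK_X)$ and klt-ness of its relative $\proj$, and you derive neither from the global hypotheses. This is precisely the non-$\Qq$-Gorenstein difficulty of the theorem.

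The paper avoids it by building the klt model from $Y$. The divisors of $X$ contracted by $f$ have $a(P,Y)\le 0$, so \cite[Corollary 1.4.3]{BCHM} extracts them. This gives a klt $(Z,\Delta_Z)$ crepant to $Y$, with $-(K_Z+\Delta_Z)$ nef and big and $Z\dashrightarrow X$ small. Jiang's alternative instead uses a $\Qq$-factorial terminal model of $X$ with relatively nef canonical divisor, plus the negativity lemma.

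Within your own framework the repair is simply to take $\epsilon=0$. Then $K_X+\Delta_0\sim_{\Qq}0$ is automatically $\Qq$-Cartier, and $p^*(K_X+\Delta_0)=q^*(K_Y+\frac1mH)$. Hence $(X,\Delta_0)$ is klt for $m\ge 2$. Since $\Delta_0\sim_{\Qq}-K_X$ is big, the perturbation in Lemma~\ref{lem:simple2} then gives Fano type directly.

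Both that repair and your discrepancy comparison rest on the points you wave through with ``this should hold'' and ``up to the standard identification''. These are the real content of Section 2. Since $K_X$ need not be $\Qq$-Cartier, $p^*K_X$ and $p^*(-mK_X)$ are undefined. You must instead take $D'=-mK_W+E$, with $E\ge 0$ $p$-exceptional and $p_*:H^0(kD')\cong H^0(-kmK_X)$ for all $k$ (Lemma~\ref{lem:key}), and write $|D'|=|q^*A|+F$. With $K_Y=q_*K_W$ this gives $-mK_Y\sim A+q_*F-q_*E$.

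So $A\sim -mK_Y$, and the relation $K_W-\frac1mE+\frac1mF\sim_{\Qq}q^*K_Y$ (the correct replacement for your $q^*K_Y=p^*K_X+\frac1mF$), hold only once you know two things: that $F$ is $q$-exceptional, and that every $p$-exceptional divisor is $q$-exceptional. The second is exactly the statement that $f$ extracts no divisors. The paper proves both in Theorem~\ref{IitakaWeil}(2),(3), using very exceptional divisors and \cite[Lemma 3.2]{birkar2012existence} together with birationality of $f'$. Your proposal only asserts them.
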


Varieties of Fano type are not necessarily $\Qq$-Gorenstein. Hence Theorem \ref{thm:main} generalizes \cite[Theorem 1.1]{CasciniGongyo2013anticanonical} or \cite[Theorem 1.1]{ChoiHwangPark2015Factorization}, where the $\Qq$-Gorenstein assumption on the variety is required.

We discuss section rings for Weil divisors in Section 2, and prove Theorem \ref{thm:main} in Section 3.

\subsection*{Acknowledgments}
The author thanks Professor Chen Jiang for providing a simpler proof of Theorem \ref{thm:main}. The author acknowledges Professor Mao Sheng for the financial support through the CAS Project for Young Scientists in Basic Research (Grant No. YSBR-032).

\subsection*{Notation and conventions}
We work over the field of complex numbers. 

Let $f:X\to Y$ be a projective morphism with connected fibers between normal varieties. Let $D$ be an effective $\Qq$-divisor on $X$. There are effective $\Qq$-divisors $D^h$ and $D^v$ such that $D=D^h+D^v$ and a component $P$ of $D$ dominates $Y$ if and only if $P\subset \Supp D^h$. We call such $D^h$ and $D^v$ the horizontal part and the vertical part of $D$, respectively. We say that $D$ is vertical over $Y$ if $D=D^v$. If $D$ is vertical over $Y$, we say that $D$ is exceptional over $Y$ if $\codim f(D)\geq2$, and very exceptional over $Y$ if for any prime divisor $P\subset Y$, there is a prime divisor $Q\subset X$ such that $f(Q)=P$ but $Q\not\subset\Supp D$.

For a Weil divisor $D$ on a projective normal variety $X$, we denote by $f_D$ the rational map associated to the linear system $|D|$. For a linear system $|D|$, we denote by $\Mov|D|$ and $\Fix|D|$ its moving part and fixed part, respectively.


\section{On Weil divisors whose section rings are finitely generated}

In this section, unless otherwise specified, a divisor refers to a Weil divisor.

\begin{proof}[\textbf{Setting}]\label{setting}
Let $X$ be a projective normal variety and let $D$ be a (non-necessarily Cartier) divisor on $X$, such that $R(X,D):=\bigoplus_{m\geq0}H^0(X,mD)$ is finitely generated and is generated in degree one. Let $Y:=\proj R(X,D)$. Let $f=f_{D}:X\dashrightarrow\Pp^{N}$ be the rational map associated to the linear system $|D|$. Let $\pi:X'\to X$ be a resolution of singularities, such that the rational map $f':X'\dashrightarrow Y$ is a morphism. Let $\Ex(\pi)$ be the codimension one part of the exceptional locus of $\pi$.    
\end{proof}

This section aims to generalize the following well-known theorem to Weil divisors.

\begin{thm}\label{IitakaCartier}
In the \textbf{Setting}. Assume that $D$ is Cartier. Then $Y$ is normal, $Y=f'(X')$, $f'$ has connected fibers, the rational maps $f_{mD}$ are birationally equivalent to $f$ for all $m>0$, and there is an ample and free Cartier divisor $A$ on $Y$, such that $\Mov|m\pi^*D|=|mf'^*A|$ for all $m>0$.\end{thm}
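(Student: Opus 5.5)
Since $D$ is Cartier, we pass to the resolution and work with the Cartier divisor $L:=\pi^*D$ on $X'$. Normality of $X$ gives $\pi_*\mathcal{O}_{X'}=\mathcal{O}_X$, hence $H^0(X',mL)=H^0(X,mD)$ for every $m$, so $R(X',L)=R(X,D)$ and $f'$ is simply the map attached to $|L|$ (resolved). The argument rests on the generation in degree one, which identifies $Y$ with the image of the map given by $|D|$. Concretely, let $V=H^0(X,D)$ and $S=\mathrm{Sym}\,V\to R(X,D)$. Surjectivity of this map embeds $Y=\proj R\subset \Pp(V)=\Pp^N$ as a closed subscheme, and the homogeneous ideal of $Y$ is exactly the set of polynomials vanishing on $f(X)$, since a homogeneous $F\in S_m$ maps to the section $F(s_0,\dots,s_N)\in H^0(X,mD)$, which is $0$ iff $F$ vanishes on the image. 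So $Y=\overline{f(X)}=f'(X')$, which gives the second claim, and $A:=\mathcal{O}_Y(1)$ is very ample and globally generated.

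Next I will build the divisor $A$ and identify the moving parts. Replacing $X'$ by a further blow-up, we may assume $\Mov|L|$ is free, so $L=f'^*A+F$ with $F=\Fix|L|$ effective and $f'$ the morphism defined by the free part. For each $m$, the natural map $H^0(Y,mA)\supseteq R_m=H^0(X,mD)\to H^0(X',mL)$ is surjective. Indeed, by generation in degree one every section of $mL$ is a polynomial in sections of $L$, so it lies in $H^0(mf'^*A)\cdot s_F^m$. Hence $|mL|=|mf'^*A|'+mF$, where the first term is the sublinear system pulled back from $R_m\subseteq H^0(Y,mA)$. That subsystem is base point free because $A$ is, so $\Mov|mL|=f'^*(\text{image of }R_m)$ and $\Fix|mL|=mF$.

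To upgrade this to the full $|mf'^*A|$, I will establish normality of $Y$ together with $R_m=H^0(Y,mA)$. The ring $R=\bigoplus H^0(X',mL)$ is integrally closed in its fraction field, because it is a section ring of a normal variety. Since $Y=\proj R$ with $R$ normal and generated in degree one, $Y$ is normal. Moreover $H^0(Y,\mathcal{O}(m))\supseteq R_m$, and equality holds: a section $t\in H^0(Y,mA)$ pulls back to $f'^*t+mF\in H^0(X',mL)=R_m$. This gives $\Mov|mL|=|mf'^*A|$.

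For connected fibers I will take the Stein factorization $X'\to Y''\xrightarrow{g} Y$ with $g$ finite. Then $H^0(X',mf'^*A)=H^0(Y'',mg^*A)$. If $\deg g>1$, then for $m\gg0$ we get $h^0(Y'',mg^*A)>h^0(Y,mA)$, since the extra summand of $g_*\mathcal{O}_{Y''}/\mathcal{O}_Y$ tensored with $mA$ acquires sections. This contradicts $H^0(X',mL)=H^0(Y,mA)$ (the fixed part $mF$ adds nothing). Hence $g$ is birational and finite onto a normal $Y$, so it is an isomorphism. Finally, each $f_{mD}$ is given by $\Mov|mL|=|mf'^*A|$, i.e.\ by $f'$ composed with the embedding of $Y$ by $|mA|$; since $mA$ is very ample, this shows $f_{mD}\sim f$ birationally. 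I expect the main obstacle to be the normality step: making precise that $\proj$ of an integrally closed graded domain generated in degree one is normal, and that every section of $\mathcal{O}_Y(m)$ comes from $R_m$.
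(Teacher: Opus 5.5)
The paper gives no proof of this statement. It is quoted as a well-known theorem and then used as a black box in Theorem~\ref{IitakaWeil}, so there is no argument of the paper to compare yours against. Your proposal is a correct, self-contained proof. Its core steps are sound:
\begin{itemize}
\item surjectivity of $\mathrm{Sym}\,H^0(D)\to R$ identifies $Y$ with $\overline{f(X)}=f'(X')$;
\item generation in degree one gives $H^0(X',mL)=f'^*H^0(Y,mA)\cdot s_F^m$, hence $\Fix|mL|=mF$;
\item the Serre-vanishing count against the Stein factorization gives connected fibers.
\end{itemize}

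A few points need tidying.
\begin{itemize}
\item The ``further blow-up'' is unnecessary, and it would oblige you to carry the conclusion back to the $X'$ fixed in the Setting. Since $X'$ is smooth and $f'$ is already a morphism, the moving part of $|L|$ agrees with $f'^*A$ off the base locus, which has codimension at least two. Hence it equals $f'^*A$ and is automatically free.
\item You state $\Mov|mL|=|mf'^*A|$ before connected fibers are established. This requires $H^0(X',mf'^*A)=f'^*H^0(Y,mA)$, which is true, but for a reason you only use later. Multiplication by $s_F^m$ embeds $H^0(X',mf'^*A)$ into $H^0(X',mL)=R_m$, and $R_m$ is already exhausted by $f'^*H^0(Y,mA)\cdot s_F^m$.
\item In the pullback argument for $R_m=H^0(Y,mA)$, you should check that the element of $R_m$ you produce maps back to $t$. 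It does, because $f'^*$ is injective on sections ($f'$ is dominant and $Y$ is integral).
\end{itemize}

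The step you flag as the main obstacle is not really one. Your graded-ring route works. $R$ is the intersection of $\Cc(X)[t]$ with the valuation rings of the monomial valuations $w_P$ with $w_P(t)=\operatorname{mult}_PD$, so $R$ is normal. The degree-zero part of $R_x$, for $x\in R_1$, is then normal too.

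You can also bypass normality of $R$ entirely. In the Stein factorization $X'\to Y''\xrightarrow{g}Y$, suppose $g$ is not an isomorphism.
\begin{itemize}
\item Since $g$ is affine, $Q:=g_*\mathcal O_{Y''}/\mathcal O_Y\neq 0$, whether or not $\deg g>1$.
\item For $m\gg0$ we have $h^0(Q(m))>0$ and $H^1(\mathcal O_Y(m))=0$.
\item Together these give $h^0(X',mf'^*A)>h^0(Y,mA)\geq\dim R_m$.
\item This contradicts $H^0(X',mf'^*A)\hookrightarrow H^0(X',mL)=R_m$.
\end{itemize}
This single count yields both normality of $Y$ and connectedness of the fibers of $f'$. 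It also gives $R_m=H^0(Y,mA)$ for free via $\dim R_m\le h^0(Y,mA)=h^0(X',mf'^*A)\le\dim R_m$.
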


We start with the following Lemma.

\begin{lem}\label{lem:key}In the \textbf{Setting}.
\begin{enumerate}
\item There is a divisor $D'$ on $X'$ such that \begin{align}\label{one-one}
\pi_*D'=D \text{ and } \pi_*:H^0(D')\cong H^0(D). 
\end{align}
\item Among all divisors satisfying \ref{one-one}, there is a minimal one.
\item If $D'$ is a divisor satisfying \ref{one-one} and $E$ is an effective $\pi$-exceptional divisor, then $D'+E$ also satisfies \ref{one-one}.
\item If $D'$ is a divisor satisfying \ref{one-one}, then $\pi_*:H^0(mD')\cong H^0(mD)$ for all $m\geq0$.
\end{enumerate}

\end{lem}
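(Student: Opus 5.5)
The idea is to compare sections on $X'$ and on $X$ via pushforward. Throughout I use the following standard fact. Since $\pi$ is birational and $X$ is normal, for any divisor $G$ on $X'$ the map $\pi_*:H^0(X',G)\to H^0(X,\pi_*G)$ is injective. Indeed, both spaces sit inside the common function field $K(X')=K(X)$, and a rational function $\phi$ with $\operatorname{div}\phi+G\ge0$ satisfies $\operatorname{div}\phi+\pi_*G=\pi_*(\operatorname{div}\phi+G)\ge0$. So the pushforward is just the inclusion of function spaces
\[
\{\phi:\operatorname{div}\phi+G\ge0\}\subseteq\{\phi:\operatorname{div}\phi+\pi_*G\ge0\}.
\]
Hence condition \ref{one-one} says exactly that every $\phi\in H^0(X,D)$ also satisfies $\operatorname{div}_{X'}\phi+D'\ge0$.

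\textbf{Parts (1)--(3).} For (1), let $\tilde D$ be the strict transform of $D$ on $X'$. Choose a basis $\phi_1,\dots,\phi_r$ of the finite-dimensional space $H^0(X,D)$. Each $\operatorname{div}_{X'}\phi_i+\tilde D$ is effective away from $\Ex(\pi)$, so adding a sufficiently large multiple of the exceptional divisors gives $D'=\tilde D+\sum a_jE_j$ with $\operatorname{div}\phi_i+D'\ge0$ for all $i$. Then \ref{one-one} holds, because every element of $H^0(X,D)$ is a linear combination of the $\phi_i$.

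For (2), fix the strict-transform part. Any $D'$ with $\pi_*D'=D$ has the form $\tilde D+\sum a_jE_j$, and \ref{one-one} holds if and only if $a_j\ge\max_i(-\operatorname{mult}_{E_j}\operatorname{div}\phi_i)$ for every $j$. This is because a divisor bound for a linear combination follows from the bounds for the basis elements. The condition is therefore coordinatewise, so taking each $a_j$ equal to this maximum gives the minimal such divisor. For (3), note that $\pi_*(D'+E)=D$, and $H^0(D')\subseteq H^0(D'+E)\subseteq H^0(D)$, with the outer spaces equal.

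\textbf{Part (4).} This is the main point, and it is where the hypothesis that $R(X,D)$ is generated in degree one enters. By the injectivity above it suffices to show surjectivity of $\pi_*:H^0(mD')\to H^0(mD)$. Since $R(X,D)$ is generated in degree one, every element of $H^0(X,mD)$ is a sum of products $\phi_{i_1}\cdots\phi_{i_m}$ with $\phi_{i_k}\in H^0(X,D)$. By (1), each $\phi_{i_k}$ lies in $H^0(X',D')$, so each product lies in $H^0(X',mD')$, since $\operatorname{div}(\phi\psi)+2D'=(\operatorname{div}\phi+D')+(\operatorname{div}\psi+D')$. Hence $H^0(mD)\subseteq H^0(mD')$ inside the function field. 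The case $m=0$ is trivial.

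I expect no serious obstacle. The only care needed is to work consistently with sections as rational functions, so that the pushforward is literally an inclusion and compatible with multiplication.
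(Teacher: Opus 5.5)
Your proof is correct and is essentially the paper's argument. Parts (1) and (3) test a basis of $H^0(D)$ inside the function field and add exceptional divisors. Part (2) uses the fact that the condition is coordinatewise along the $\pi$-exceptional primes; the paper phrases this as closure under $D_1\wedge D_2$, while you write the minimum down explicitly. Part (4) uses generation in degree one, which is exactly the paper's $\text{Sym}^m$ diagram.

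One small caveat, shared with the paper: (2) tacitly needs $H^0(D)\neq 0$. Otherwise every $D'$ with $\pi_*D'=D$ satisfies the condition, no minimum exists, and your maximum over the basis is taken over an empty set.
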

\begin{proof}
(1) Let $\{s_i\}$ be basis of $H^0(D)\subset\Cc(X)$. Then a divisor $D'$ with $\pi_*D'=D$ satisfies \ref{one-one} if and only if $\di_{X'}(s_i)+D'\geq0$ for all $i$. Hence $\pi^{-1}_*D+m\Ex(\pi)$ satisfies \ref{one-one} for all $m\gg0$.

(2) If $D_1$, $D_2$ are divisors satisfying \ref{one-one}, then $D_1\wedge D_2$ also satisfies \ref{one-one}. 

(3) It is clear from the argument in (1).

(4) It follows from the commutative diagram
\begin{center}
\begin{tikzcd}
\text{Sym}^mH^0(D') \arrow[d, "\cong"] \arrow[r] & H^0(mD') \arrow[d, "\pi_*", hook] \\
\text{Sym}^mH^0(D) \arrow[r, two heads]            & H^0(mD).      
\end{tikzcd}  
\end{center}
\end{proof}

\begin{thm}\label{IitakaWeil}
In the \textbf{Setting}. We have $Y$ is normal, $Y=f'(X')$, $f'$ has connected fibers, the rational maps $f_{mD}$ are birationally equivalent to $f$ for all $m>0$, and the following holds.
\begin{enumerate}
\item There is an ample and free Cartier divisor $A$ on $Y$ such that
if $D'$ is a divisor satisfying  \ref{one-one}, then $\Mov|mD'|=|mf'^*A|$ for all $m>0$. 
    
\item If $D'$ is a divisor satisfying  \ref{one-one}, then the $f'$-vertical part of $\Fix|D'|$ is very exceptional over $Y$.

\item The $f'$-vertical part of $\Ex(\pi)$ is very exceptional over $Y$.
\end{enumerate}

\end{thm}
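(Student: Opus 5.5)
The plan is to reduce to the Cartier case, Theorem \ref{IitakaCartier}, on the smooth model $X'$. Fix a divisor $D'$ on $X'$ satisfying \ref{one-one}; one exists by Lemma \ref{lem:key}(1). Since $X'$ is smooth, $D'$ is Cartier. By Lemma \ref{lem:key}(4), $\pi_*$ gives isomorphisms $H^0(X',mD')\cong H^0(X,mD)$ for all $m$, compatible with multiplication. Hence $R(X',D')\cong R(X,D)$ as graded rings, so $R(X',D')$ is finitely generated, generated in degree one, and $\proj R(X',D')=Y$.

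Under these identifications, the rational map $f_{mD'}$ on $X'$ is $f_{mD}\circ\pi$, and $f_{D'}$ is exactly $f'$. Applying Theorem \ref{IitakaCartier} to $(X',D')$, with the identity as resolution, gives all of the following: $Y$ is normal, $Y=f'(X')$, $f'$ has connected fibers, the maps $f_{mD}$ are birational to $f$, and there is an ample free Cartier $A$ on $Y$ with $\Mov|mD'|=|mf'^*A|$.

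For (1) I must check that $A$ does not depend on the choice of $D'$. If $D_1,D_2$ both satisfy \ref{one-one}, then $D_1-D_2$ is $\pi$-exceptional. Sections of $mD_1$ and $mD_2$ correspond to the same rational functions, namely elements of $H^0(mD)$. So $|mD_1|$ and $|mD_2|$ differ only by adding the fixed exceptional divisor $m(D_1-D_2)$, and their moving parts agree. Alternatively, $A$ is the pullback of $\mathcal O_Y(1)$ under $Y=\proj R(X,D)$, which only depends on $R(X,D)$, after passing to a Veronese subring if necessary.

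For (2), write $D'\sim f'^*A+F$ with $F=\Fix|D'|$, so $mD'\sim mf'^*A+mF$ and $mF$ is contained in the fixed part of $|mD'|$ for every $m$, using (1). Suppose a vertical component $P$ of $F$ mapped onto a prime divisor $Q\subset Y$ and every prime divisor of $X'$ over $Q$ lay in $\Supp F$. Then $f'^*Q\le cF$ for some $c>0$. Choose $m$ and $H\in|mA-Q|$, which is possible since $A$ is ample. Then $f'^*H+f'^*Q+mF-f'^*Q\in|mD'|$. The divisors $f'^*H+f'^*Q$ move in a linear system strictly larger than $|mf'^*A|$ once we allow subtracting the part $f'^*Q\le mF$ (take $m\ge c$). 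More precisely, $mf'^*A+f'^*Q\le mD'$ up to linear equivalence would give $h^0(mA+Q)$ sections, while $R(X,D)_m=H^0(Y,mA)$ by the Proj construction. Since $h^0(Y,mA+Q)>h^0(Y,mA)$ for $m\gg0$, this is a contradiction. This dimension comparison is the main point, and I expect it to be the delicate step: one must carefully identify $H^0(X',mD')$ with $H^0(Y,mA)$ using normality of $Y$ and the fact that $f'_*\mathcal O_{X'}=\mathcal O_Y$.

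For (3), apply (2) to $D'+k\Ex(\pi)$ with $k\gg0$, which satisfies \ref{one-one} by Lemma \ref{lem:key}(3). Its fixed part contains every exceptional component, because the moving part $f'^*A$ is independent of $k$ while the coefficients of the exceptional divisors grow. So the vertical part of $\Ex(\pi)$ lies in the vertical part of the fixed part. Since being very exceptional passes to subdivisors, (3) follows.
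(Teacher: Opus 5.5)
Your reduction to Theorem \ref{IitakaCartier} on $X'$ is fine and matches the paper, and so are your arguments for (1) and (3). Your (1) is even slightly simpler: $|mD_1|=|mD_2|+m(D_1-D_2)$ directly, with no need for the minimal divisor $\tilde D$. The gap is in (2). There the paper simply cites Birkar's Lemma 3.2, and your attempt to reprove it breaks at the claim ``$f'^*Q\le cF$'', which is false in general.

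First, $Y$ is only normal, so $Q$ need not be $\Qq$-Cartier, and $f'^*Q$ may not even be defined. More seriously, even when it is defined, $f'^*Q$ contains $f'$-exceptional prime divisors whose images are proper closed subsets of $Q$ (of codimension $\geq 2$ in $Y$). The failure of very exceptionality only says that the components of $f'^{-1}(Q)$ \emph{dominating} $Q$ lie in $\Supp F$; it gives no control over these other components. So when you pull back a section of $\mathcal{O}_Y(mA+Q)$, you may get a rational function on $X'$ with poles along exceptional divisors outside $\Supp F$. Hence the inequality $h^0(X',mD')\ge h^0(Y,mA+Q)$ is unjustified. The line $f'^*H+f'^*Q+mF-f'^*Q\in|mD'|$ is also off: that divisor lies in $|mD'-f'^*Q|$, not in $|mD'|$. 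Note too that the step you flagged as delicate is not the hard one. The identity $h^0(X',mD')=h^0(X',mf'^*A)=h^0(Y,mA)$ follows at once from (1) and $f'_*\mathcal{O}_{X'}=\mathcal{O}_Y$.

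The repair is to work only over the generic point of $Q$, where $Y$ is regular.
\begin{enumerate}
\item Let $G$ be the part of $f'^{-1}(Q)$ made up of the components dominating $Q$, taken with the multiplicities of $f'^*Q$ there. Then $G\le cF$ for some $c>0$.
\item The sheaf $\mathcal{G}:=f'_*\mathcal{O}_{X'}(G)$ is coherent, torsion-free of rank one, and satisfies $\mathcal{O}_Y\subsetneq\mathcal{G}$. The inclusion is strict at the generic point of $Q$, where $\mathcal{G}=\mathcal{O}_Y(Q)$.
\item Serre vanishing for $\mathcal{O}_Y(mA)$, together with global generation of the nonzero sheaf $(\mathcal{G}/\mathcal{O}_Y)\otimes\mathcal{O}_Y(mA)$, gives $h^0(Y,\mathcal{G}\otimes\mathcal{O}_Y(mA))>h^0(Y,mA)$ for $m\gg0$.
\item By the projection formula, for $m\ge c$ we have $H^0(Y,\mathcal{G}\otimes\mathcal{O}_Y(mA))=H^0(X',mf'^*A+G)\subseteq H^0(X',mf'^*A+mF)\cong H^0(X',mD')$.
\end{enumerate}
Since $h^0(X',mD')=h^0(Y,mA)$, this is the desired contradiction. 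With this replacement, or by citing Birkar's lemma as the paper does, your proof goes through.
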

\begin{proof}
By Lemma \ref{lem:key} and Theorem \ref{IitakaCartier}, all statements except (1), (2), and (3) are clear. 

Let $\Tilde{D}$ be the minimal one among all divisors satisfying \ref{one-one}. Write $|\Tilde{D}|=|M|+\tilde{F}$ as a sum of moving part and fixed part. By Lemma \ref{lem:key} and Theorem \ref{IitakaCartier}, there is an ample and free Cartier divisor $A$ on $Y$ such that $\Mov|m\Tilde{D}|=|mf'^*A|$ and $\Fix|m\Tilde{D}|=m\tilde{F}$ for all $m>0$. 

We prove (1) and (2) in this paragraph. Let $D'$ be a divisor satisfying \ref{one-one}. Since $\Tilde{D}$ is minimal, there is an effective divisor $E$ such that $D'=\Tilde{D}+E$. By Lemma \ref{lem:key}(4), we have $|mD'|=|m\Tilde{D}|+mE$. Hence $\Mov|mD'|=\Mov|m\Tilde{D}|=|f'^*mA|$. Let $F'$ be the $f'$-vertical part of $\Fix|D'|$. By $\Fix|m(M+F')|=mF'$ for all $m>0$ and \cite[Lemma 3.2]{birkar2012existence}, we have $F'$ is very exceptional over $Y$.

(3) Since $\tilde{D}+\Ex(\pi)$ satisfies \ref{one-one} and $\Ex(\pi)\leq \Fix|\tilde{D}+\Ex(\pi)|$, we have the $f'$-vertical part of $\Ex(\pi)$ is very exceptional over $Y$ by (2).
\end{proof}

\section{Proof of Theorem \ref{thm:main}}

\begin{lem}\label{lem:simple}
Let $(X,\Delta)$ be a projective klt pair with $-(K_X+\Delta)$ nef and big, then $X$ is of Fano type.     
\end{lem}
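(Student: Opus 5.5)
The plan is the standard perturbation argument: write the nef and big divisor $-(K_X+\Delta)$ as an ample divisor plus a small effective divisor, and absorb that small divisor into the boundary.

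\textbf{Step 1 (Kodaira's lemma).} Set $L:=-(K_X+\Delta)$. Since $K_X+\Delta$ is $\Qq$-Cartier, $L$ is a $\Qq$-Cartier divisor, and it is nef and big. By Kodaira's lemma there are an ample $\Qq$-divisor $A$ and an effective $\Qq$-divisor $E$ with $L\sim_{\Qq}A+E$.

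\textbf{Step 2 (perturb the boundary).} For each rational $0<\epsilon\le 1$ we have
\[
L\sim_{\Qq}(1-\epsilon)L+\epsilon A+\epsilon E .
\]
Here $(1-\epsilon)L+\epsilon A$ is ample, being nef plus ample. Put $\Delta_\epsilon:=\Delta+\epsilon E$, which is effective. Then
\[
-(K_X+\Delta_\epsilon)=L-\epsilon E\sim_{\Qq}(1-\epsilon)L+\epsilon A,
\]
which is ample.

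\textbf{Step 3 (klt is an open condition).} We need $(X,\Delta_\epsilon)$ to be klt for $0<\epsilon\ll1$.
\begin{enumerate}
\item Take a log resolution $g:W\to X$ of $(X,\Delta+E)$. Then $K_X+\Delta$ and $E$ are $\Qq$-Cartier, so $K_X+\Delta_\epsilon$ is too.
\item The discrepancies of $(X,\Delta_\epsilon)$ along the finitely many divisors on $W$ (exceptional divisors and strict transforms of the components of $\Delta+E$) depend affinely on $\epsilon$.
\item At $\epsilon=0$ these discrepancies are all $>-1$, because $(X,\Delta)$ is klt. So they remain $>-1$ for $\epsilon$ small.
\item Since $g$ is a log resolution, checking discrepancies on $W$ suffices.
\end{enumerate}
Hence $(X,\Delta_\epsilon)$ is klt with $-(K_X+\Delta_\epsilon)$ ample, so $X$ is of Fano type.

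The only point needing care is Step 3, namely that klt is preserved under a small perturbation by an effective $\Qq$-Cartier divisor. The log resolution argument handles this, so I expect no real obstacle.
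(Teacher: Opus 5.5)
Your proposal is correct and is essentially the paper's argument. Kodaira's lemma gives $-(K_X+\Delta)\sim_{\Qq}A+E$, and for $0<\epsilon\ll1$ the pair $(X,\Delta+\epsilon E)$ is klt with $-(K_X+\Delta+\epsilon E)\sim_{\Qq}(1-\epsilon)(A+E)+\epsilon A$ ample; you only additionally spell out, via a log resolution, the openness of the klt condition, which the paper takes for granted (using implicitly that $E\sim_{\Qq}L-A$ is $\Qq$-Cartier).
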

\begin{proof}
Since $-(K_X+\Delta)$ is big, there are an ample $\Qq$-divisor $A$ and an effective $\Qq$-divisor $E$ such that $-(K_X+\Delta)\sim_{\Qq}A+E$. If $0<\epsilon\ll1$, then $(X,\Delta+\epsilon E)$ is klt and $-(K_X+\Delta+\epsilon E)\sim_{\Qq}(1-\epsilon)(A+E)+\epsilon A$ is ample.   
\end{proof}

\begin{lem}\label{lem:simple2}
Let $f:X\to Y$ be a birational morphism between projective normal varieties. If $X$ is of Fano type, then $Y$ is of Fano type.         
\end{lem}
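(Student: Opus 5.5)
The idea is to push the Fano type structure down along $f$ and then invoke Lemma \ref{lem:simple}. Choose an effective $\Qq$-divisor $\Delta$ on $X$ such that $(X,\Delta)$ is klt and $-(K_X+\Delta)$ is ample. Set $\Delta_Y:=f_*\Delta$. Then $K_Y+\Delta_Y=f_*(K_X+\Delta)$, but $Y$ need not be $\Qq$-Gorenstein, so $K_Y+\Delta_Y$ need not be $\Qq$-Cartier. The pushforward alone is therefore not enough, and we need to add a suitable boundary.

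To fix this, first replace $\Delta$ so that $-(K_X+\Delta)\sim_{\Qq} H$, where $H$ is a general effective ample $\Qq$-divisor with small coefficients. Then $(X,\Delta+H)$ is klt and $K_X+\Delta+H\sim_{\Qq}0$. Pushing forward, we put $\Gamma:=f_*(\Delta+H)$. Since $K_X+\Delta+H\sim_{\Qq}0$, we get $K_Y+\Gamma\sim_{\Qq}0$; this uses that pushforward of principal divisors is principal, after taking a multiple so that $m(K_X+\Delta+H)=\di(\phi)$. In particular $K_Y+\Gamma$ is $\Qq$-Cartier, and $f^*(K_Y+\Gamma)\sim_{\Qq}0\sim_{\Qq}K_X+\Delta+H$. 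Both sides push forward to $K_Y+\Gamma$, so the difference is $f$-exceptional and $\Qq$-linearly trivial, hence $\equiv_f 0$. By the negativity lemma it vanishes, giving
\[
K_X+\Delta+H=f^*(K_Y+\Gamma)
\]
as divisors, after choosing compatible representatives. Consequently $(Y,\Gamma)$ is crepant to the klt pair $(X,\Delta+H)$. Since discrepancies of $(Y,\Gamma)$ are discrepancies of $(X,\Delta+H)$ over common models, and $\Gamma\ge0$, the pair $(Y,\Gamma)$ is klt.

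We now have $(Y,\Gamma)$ klt with $-(K_Y+\Gamma)\sim_{\Qq}0$. This is nef but not big, so we still need bigness. To get it, keep part of $H$ as positivity. Write $H=\tfrac12H+\tfrac12H$ and set $\Gamma':=f_*(\Delta+\tfrac12 H)$. Then
\[
-(K_Y+\Gamma')\sim_{\Qq}\tfrac12 f_*H,
\]
which is big because $f$ is birational and $H$ is big. It is $\Qq$-Cartier because $K_Y+\Gamma$ is $\Qq$-Cartier and $f_*H=\Gamma-\Gamma'$ up to scaling; more precisely, $f_*H\sim_{\Qq}-(K_Y+f_*\Delta)$, and this is $\Qq$-Cartier exactly when $K_Y+f_*\Delta$ is.

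This is the main obstacle: $f_*H$ need not be $\Qq$-Cartier or nef. I would avoid it by taking $H'$ ample on $Y$ and working with $\Gamma'':=\Gamma-\epsilon G$, where $G\sim_{\Qq}H'$ is chosen so that $0\le \epsilon G\le \Gamma$. This is possible because $\Gamma\ge f_*H$, with $H$ general ample, contains a small multiple of an ample on $Y$ by bigness, after adjusting $H$ to be $f^*(\text{ample})+\text{effective}$ from the start. Then $-(K_Y+\Gamma'')\sim_{\Qq}\epsilon H'$ is ample, and $(Y,\Gamma'')$ is klt since $\Gamma''\le\Gamma$. This gives Fano type directly, without even needing Lemma \ref{lem:simple}.
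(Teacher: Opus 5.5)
Your final argument is correct. Its first half (a klt complement $K_X+\Delta+H\sim_{\Qq}0$, pushed forward crepantly to a klt pair $(Y,\Gamma)$ with $K_Y+\Gamma\sim_{\Qq}0$) is exactly the paper's. The two proofs differ in where the ample part is extracted.

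The paper works on $Y$. It uses bigness of the Weil divisor $f_*A$ to write $f_*A\sim_{\Qq}A'+E$ and takes the boundary $f_*\Delta+(1-\epsilon)f_*A+\epsilon E$. The support of this boundary differs from that of $\Gamma$, so klt-ness needs a small-$\epsilon$ perturbation: one interpolates with $(Y,f_*\Delta+E)$, whose log canonical divisor is $\Qq$-Cartier because it is $\Qq$-linearly equivalent to $-A'$.

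You work on $X$. By openness of the ample cone, $-(K_X+\Delta)-\delta f^*H'$ is ample for $0<\delta\ll1$. So you may take $H=\delta f^*G+A_X$ with $G\sim_{\Qq}H'$ and $A_X$ general. Then $\Gamma=f_*\Delta+\delta G+f_*A_X\geq\delta G$ as actual divisors. Removing the effective $\Qq$-Cartier divisor $\delta G$ from a klt boundary preserves klt with no perturbation. This leaves $(Y,f_*(\Delta+A_X))$ with $-(K_Y+f_*(\Delta+A_X))\sim_{\Qq}\delta H'$.

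What your route buys is that it avoids Kodaira's lemma for non-$\Qq$-Cartier divisors on $Y$ and the perturbation step. The price is a Bertini check on $X$.

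Three things to tidy when you write it up:
\begin{enumerate}
\item The inequality $\epsilon G\le\Gamma$ comes from this construction, not from ``bigness''. Bigness of $f_*H$ only ever gives a $\Qq$-linear equivalence, which is exactly why the paper needs its perturbation.
\item The adjusted $H$ is no longer a general member of $|-m(K_X+\Delta)|$, so you must re-verify that $(X,\Delta+\delta f^*G+A_X)$ is klt. This holds because $f^*G$ and $A_X$ are small multiples of general members of base point free linear systems; apply Bertini on a log resolution of $(X,\Delta)$. In particular $G$ itself must be chosen general.
\item Drop the exploratory paragraph on $\Gamma'$, which first asserts and then retracts $\Qq$-Cartierness.
\end{enumerate}
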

\begin{proof}
Let $\Delta$ and $A$ be effective $\Qq$-divisors such that $A$ is ample, $(X,\Delta+A)$ is klt and $K_X+\Delta+A\sim_{\Qq}0$. By $K_X+\Delta+A\sim_{\Qq}f^*(K_Y+f_*\Delta+f_*A)$, we have $(Y,f_*\Delta+f_*A)$ is klt. Since $f_*A$ is big, there are an ample effective $\Qq$-divisor $A'$ and an effective $\Qq$-divisor $E$ such that $f_*A\sim_{\Qq}A'+E$. If $0<\epsilon\ll1$, then $(Y,f_*\Delta+(1-\epsilon)f_*A+\epsilon E)$ is klt and $-(K_Y+f_*\Delta+(1-\epsilon)f_*A+\epsilon E)\sim_{\Qq}\epsilon A$ is ample.
\end{proof}

\begin{proof}[Proof of Theorem \ref{thm:main}]
\textbf{The ``only if'' part.} Since $X$ is of Fano type, we have $-K_X$ is big and $X$ is of klt type. By Zhuang's Lemma \ref{lem:zhuang}, we have $\bigoplus_{m\geq0}O_X(-mK_X)$ is finitely generated over $O_X$. Consider the birational morphism $\pi:X':=\proj_X\bigoplus_{m\geq0}O_X(-mK_X)\to X$. By \cite[Lemma 6.2]{KM98}, $\pi$ is small, $K_{X'}$ is $\Qq$-Cartier, and \begin{align}\label{one-one 2}
\pi_*O_{X'}(-mK_{X'})=O_X(-mK_X) \text{  for all } m\geq0.    
\end{align} Hence $X'$ is a $\Qq$-Gorenstein Fano type variety by \cite[Lemma 2.4]{Birkar2016FanotypeFibraiton}. By  \cite[Theorem 1.1]{CasciniGongyo2013anticanonical} and \ref{one-one 2}, we have $R(X,-K_X)\cong R(X',-K_{X'})$ is finitely generated and $Y=\proj R(X,-K_X)\cong \proj R(X',-K_{X'})$ is klt.

\textbf{The ``if'' part.} Let $r$ be a positive integer such that $R(X,-rK_X)$ is generated in degree one. Let $f:X\dashrightarrow\Pp^N$ be the rational map associated to the linear system $|-rK_X|$. Let $\pi:X'\to X$ be a resolution of singularities such that the rational map $f':X'\dashrightarrow \Pp^N$ is a morphism. Let $\Ex(\pi)$ be the codimension one part of the exceptional locus of $\pi$. By Lemma \ref{lem:key}, there is an effective and $\pi$-exceptional divisor $E$ on $X'$ such that \[\pi_*:|m(-rK_{X'}+E)|\to |-mrK_X|\text{ is one-one for all }m>0.\] 

Write $|-rK_{X'}+E|=|M|+F$ as a sum of moving part and fixed part. By Theorem \ref{IitakaWeil}, we have 
\begin{enumerate}
    \item $Y=f'(X')$,
    \item $f'$ has connected fibers,
    \item $M=f'^*A$ for an ample Cartier divisor $A$ on $Y$, and 
 \item the $f'$-vertical part of $F$ and $\Ex(\pi)$ are very exceptional over $Y$. 
\end{enumerate}
Since $-K_{X}$ is big and $f'$ has connected fibers, we have $f'$ is a birational morphism. Hence $F$ and $\Ex(\pi)$ are exceptional over $Y$. Hence, the rational map $f:X\dashrightarrow Y$ is a birational contraction, i.e., there are no $f^{-1}$-exceptional divisors on $Y$. By \[-rK_{Y}=f'_*(-rK_{X'})=f'_*(-rK_{X'}+E)=f'_*(M+F)=f'_*(f'^*A+F)=A,\] 
we have \begin{align}\label{discrep}
K_{X'}-\frac{1}{r}E+\frac{1}{r}F\sim_{\Qq}f^*K_Y.    
\end{align}

Let $P\subset X'$ be any prime divisor which is $f'$-exceptional and non $\pi$-exceptional. By \ref{discrep}, we have \[a(P,Y)=a(P,X',-\frac{1}{r}E+\frac{1}{r}F)=-\mult_P(-\frac{1}{r}E+\frac{1}{r}F)=-\mult_P\frac{1}{r}F\leq0.\]By \cite[Corollary 1.4.3]{BCHM}, there is a projective klt pair $(Z,\Delta_Z)$ and a birational morphism $g:Z\to Y$ such that $K_Z+\Delta_Z\sim_{\Qq}g^*K_Y$, and
the induced rational map $Z\dashrightarrow X$ is small. Since $-K_Y$ is ample, we have $-(K_Z+\Delta_Z)$ is nef and big. Hence $Z$ is of Fano type by Lemma \ref{lem:simple}. By \cite[Lemma 2.4]{Birkar2016FanotypeFibraiton}, $X$ is of Fano type.

\textbf{Another Proof of the ``if'' part (Provided by Chen Jiang).} By \cite{BCHM}, there is a projective $\Qq$-factorial terminal variety $Y$ and a birational morphism $\pi:X'\to X$ such that $K_{X'}$ is nef over $X$. 

We claim that $\pi_*:|-mK_{X'}|\to |-mK_X|$ is one-one for all $m\geq0$. Let $s$ be a rational function on $X$ such that $\di_X(s)-mK_X\geq0$. Then $\di_{X'}(s)-mK_{X'}$ is a $\pi$-anti-nef divisor with $\pi_*(\di_{X'}(s)-mK_{X'})=\di_X(s)-mK_X\geq0$. By the Negativity Lemma, we have $\di_{X'}(s)-mK_{X'}\geq0$. Hence, the claim follows. 

Since ${X'}$ is $\Qq$-Gorenstein, $R(Y,-K_{X'})\cong R(X,-K_X)$ is finitely generated and $\proj R(X',-K_{X'})\cong \proj R(X,-K_X)$ is klt, we have $X'$ is of Fano type by \cite[Theorem 1.1]{CasciniGongyo2013anticanonical}. By Lemma \ref{lem:simple2}, $X$ is of Fano type.
\end{proof}



\bibliographystyle{alpha}

\bibliography{bibfile}

\end{document}